
\documentclass[11pt,reqno]{amsart}
\usepackage{amsmath}
\usepackage{amsfonts}
\usepackage{amssymb,latexsym}
\usepackage{cite}
\usepackage[height=190mm,width=130mm]{geometry}

\setcounter{MaxMatrixCols}{10}

\theoremstyle{plain}
\newtheorem{theorem}{Theorem}
\newtheorem{lemma}{Lemma}
\newtheorem{corollary}{Corollary}

\theoremstyle{definition}
\newtheorem{definition}{Definition}
\theoremstyle{remark}
\newtheorem{remark}{Remark}

\numberwithin{equation}{section} 
\input{tcilatex}

\begin{document}
\title[On the r-circulant matrices with the generalized bi-periodic
Fibonacci numbers]{On the r-circulant matrices with the generalized
bi-periodic Fibonacci numbers}
\author{Mehmet DA\u{G}LI}
\address{Department of Mathematics, Amasya University, Amasya, 05000, Turkey}
\email{mehmet.dagli@amasya.edu.tr}
\author{Elif TAN}
\address{Department of Mathematics, Ankara University, Science Faculty,
06100 Tandogan Ankara, Turkey.}
\email{etan@ankara.edu.tr}
\author{Oktay OLMEZ}
\address{Department of Mathematics, Ankara University, Science Faculty,
06100 Tandogan Ankara, Turkey}
\email{oolmez@ankara.edu.tr}
\subjclass[2000]{ 15A60, 15B05, 11B39}
\keywords{r-circulant matrix, spectral norm, Fibonacci numbers, bi-periodic
Fibonacci numbers.}

\begin{abstract}
In this paper, we give upper and lower bounds for the spectral norms of
r-circulant matrices with the generalized bi-periodic Fibonacci numbers.
Moreover, we investigate the eigenvalues and determinants of these matrices.
\end{abstract}

\maketitle

\section{Introduction}

The \emph{generalized bi-periodic Fibonacci sequence }$\{w_{n}\}:=%
\{w_{n}(w_{0},w_{1};a,b)\}$ with arbitrary initial conditions $w_{0}$ and $%
w_{1}$, is defined by the recurrence relation 
\begin{equation}
w_{n}=a^{\zeta (n+1)}b^{\zeta (n)}w_{n-1}+w_{n-2},\quad n\geq 2  \label{1}
\end{equation}%
where $\zeta (n)=(1-(-1)^{n})/2$, and $a$ and $b$ be nonzero real numbers 
\cite{edson}. Note that $\zeta (n)$ returns to $0$ when $n$ is even, and to $%
1$ when $n$ is odd. Several well-known integer sequences are special cases
of this sequence. For example, this sequence is reduced to the bi-periodic
Fibonacci sequence $\{q_{n}\}$ for $w_{0}=0,w_{1}=1$, and to the bi-periodic
Lucas sequence $\{p_{n}\}$ for $w_{0}=2,w_{1}=b$. We refer to \cite{edson,
yayenie, panario, bilgici, tan1, tan, tan-leung} for basic properties of
these sequences and their generalizations.

For $n>0$, the Binet formula of the sequence $\{w_{n}\}$ can be written as 
\begin{equation}
w_{n}=\frac{a^{\zeta \left( n+1\right) }}{\left( ab\right) ^{\left\lfloor 
\frac{n}{2}\right\rfloor }}\left( \mathbf{A}\alpha ^{n}-\mathbf{B}\beta
^{n}\right) ,  \label{2}
\end{equation}%
where $\mathbf{A}=\frac{w_{1}-({\beta }/{a})w_{0}}{\alpha -\beta }$ and $%
\mathbf{B}=\frac{w_{1}-({\alpha }/{a})w_{0}}{\alpha -\beta }$ \cite%
{tan-leung, tan}. The numbers $\alpha =\frac{ab+\sqrt{a^{2}b^{2}+4ab}}{2}$
and $\beta =\frac{ab-\sqrt{a^{2}b^{2}+4ab}}{2}$ are the roots of the
polynomial $x^{2}-abx-ab$, and they satisfy the following equalities: 
\begin{equation*}
\alpha +\beta =ab,\quad \alpha -\beta =\sqrt{a^{2}b^{2}+4ab},\quad \alpha
\beta =-ab.
\end{equation*}%
Thus the Binet formulas for the sequences $\{q_{n}\}$ and $\{p_{n}\}$ are
given by 
\begin{equation*}
q_{n}=\frac{a^{\xi \left( n+1\right) }}{\left( ab\right) ^{\left\lfloor 
\frac{n}{2}\right\rfloor }}\left( \frac{\alpha ^{n}-\beta ^{n}}{\alpha
-\beta }\right) \quad \text{and}\quad p_{n}=\frac{a^{-\xi \left( n\right) }}{%
\left( ab\right) ^{\left\lfloor \frac{n}{2}\right\rfloor }}\left( \alpha
^{n}+\beta ^{n}\right) .
\end{equation*}%
The generating function of $\{w_{n}\}$ is given in \cite[Theorem 8]{edson}
as 
\begin{equation*}
G\left( x\right) =\frac{w_{0}+w_{1}x+\left( aw_{1}-\left( ab+1\right)
w_{0}\right) x^{2}+\left( bw_{0}-w_{1}\right) x^{3}}{1-\left( ab+2\right)
x^{2}+x^{4}}.
\end{equation*}

Our goal is to calculate the Frobenius norm, and find upper and lower bounds
on the spectral norm of $r-$circulant matrices whose entries are generalized
bi-periodic Fibonacci numbers. To this purpose, we review the background
material concerning the basic definitions and facts of $r-$circulant
matrices and matrix norms in the rest of this section.

Let $r\in \mathbb{C-}\left\{ 0\right\} $. An $n\times n$ matrix $C_{r}=%
\begin{bmatrix}
c_{ij}%
\end{bmatrix}%
$ with entries 
\begin{equation*}
c_{ij}=\left\{ 
\begin{array}{cc}
c_{j-i}, & j\geq i \\ 
rc_{n+j-i}, & j<i%
\end{array}%
\right.
\end{equation*}%
is called an $r-$\emph{circulant} matrix. In other words, $C_{r}$ has the
following form: 
\begin{eqnarray*}
&& \\
C_{r} &=&\left[ 
\begin{array}{cccccc}
c_{0} & c_{1} & c_{2} & \dots & c_{n-2} & c_{n-1} \\ 
rc_{n-1} & c_{0} & c_{1} & \dots & c_{n-3} & c_{n-2} \\ 
rc_{n-2} & rc_{n-1} & c_{0} & \dots & c_{n-4} & c_{n-3} \\ 
\vdots & \vdots & \vdots & \ddots & \vdots & \vdots \\ 
rc_{2} & rc_{3} & rc_{4} & \dots & c_{0} & c_{1} \\ 
rc_{1} & rc_{2} & rc_{3} & \dots & rc_{n-1} & c_{0}%
\end{array}%
\right] . \\
&&
\end{eqnarray*}%
For the simplicity, we denote $C_{r}$ by $\func{circ}_{r,n}\left[
c_{0},c_{1},\dots ,c_{n-1}\right] $. Note that for $r=1,$ it reduces to
circulant matrix $C.$ The eigenvalues of $C_{r}$ are given as%
\begin{equation}
\lambda _{j}\left( C_{r}\right) =\sum_{k=0}^{n-1}c_{k}\left( \rho \omega
^{-j}\right) ^{k},  \label{eigenvalue}
\end{equation}%
where $\rho $ is any $n$th root of $r$, $\omega $ is any $n$th root of
unity, and $j=0,1,\ldots ,n-1.$ For details, we refer to \cite[Lemma 4]%
{cline}.

Let $A=%
\begin{bmatrix}
a_{ij}%
\end{bmatrix}%
$ be an $m\times n$ matrix. The Frobenius norm (also known as
Hilbert-Schmidt norm or Schur norm) $\left\Vert A\right\Vert _{F}$ of $A$ is
the square root of the sum of the squares of the absolute values of all
entries of $A$. That is, 
\begin{equation*}
\left\Vert A\right\Vert _{F}:=\sqrt{\sum_{i=1}^{m}\sum_{j=1}^{n}\left\vert
a_{ij}\right\vert ^{2}}.
\end{equation*}%
Note that the Frobenius norm measures the size of a matrix. Another norm
used in matrix applications is the spectral norm defined as 
\begin{equation*}
\left\Vert A\right\Vert _{2}:=\sqrt{\lambda _{max}\left( A^{\ast }A\right) },
\end{equation*}%
where $\lambda _{max}\left( A^{\ast }A\right) $ denotes the largest
eigenvalue of $A^{\ast }A$. Here, $A^{\ast }$ is the conjugate transpose of $%
A$. The following inequality by Zielke in \cite{zielke} provides a
relationship between Frobenius and spectral norms:%
\begin{equation}
\frac{1}{\sqrt{n}}\left\Vert A\right\Vert _{F}\leq \left\Vert A\right\Vert
_{2}\leq \left\Vert A\right\Vert _{F}\qquad \text{and}\qquad \left\Vert
A\right\Vert _{2}\leq \left\Vert A\right\Vert _{F}\leq \sqrt{n}\left\Vert
A\right\Vert _{2}.  \label{*}
\end{equation}%
Note that the Frobenius norm is an upper bound on the spectral norm. An
important tool for finding bounds on the matrix norms is the Hadamard
product. For $m\times n$ matrices $A=%
\begin{bmatrix}
a_{ij}%
\end{bmatrix}%
$ and $B=%
\begin{bmatrix}
b_{ij}%
\end{bmatrix}%
$, the Hadamard product of $A$ and $B$ is defined as $A\circ B:=%
\begin{bmatrix}
a_{ij}b_{ij}%
\end{bmatrix}%
$. It is simply the entry wise multiplication of $A$ and $B$. In \cite%
{mathias}, Mathias proved that 
\begin{equation}
\left\Vert A\circ B\right\Vert _{2}\leq \left\Vert A\right\Vert
_{2}\left\Vert B\right\Vert _{2}\qquad \text{and}\qquad \left\Vert A\circ
B\right\Vert _{2}\leq r_{1}\left( A\right) c_{1}\left( B\right) ,  \label{**}
\end{equation}%
where 
\begin{equation*}
r_{1}\left( A\right) =\max_{1\leq i\leq m}\sqrt{\sum_{j=1}^{n}\left\vert
a_{ij}\right\vert ^{2}}\qquad \text{and}\qquad c_{1}\left( B\right)
=\max_{1\leq j\leq n}\sqrt{\sum_{i=1}^{m}\left\vert b_{ij}\right\vert ^{2}}.
\end{equation*}%
Note that $r_{1}(A)$ is the maximum row length norm of $A$, and $c_{1}(B)$
is the maximum column length norm of $B$. See also \cite{horn}.

There has been many works related to the properties of circulant matrices, $%
r-$circulant matrices and special type of circulant matrices with
Fibonacci-like numbers. In particular, Solak \cite{solak, solak1} obtained
some bounds for the spectral norms of circulant matrices whose entries are
Fibonacci and Lucas numbers. For the spectral norms of $r-$circulant
matrices with Fibonacci and Lucas numbers, Shen and Cen\cite{shen} gave some
bounds which generalized the results in \cite{solak}. Nalli and Sen \cite%
{nalli} investigate the norms of circulant matrices with generalized
Fibonacci numbers. Alptekin et al. \cite{alptekin} obtained the spectral
norms and eigenvalues of circulant matrices whose entries are Horadam
numbers. In \cite{yazlik}, Yazlik and Taskara found upper and lower bounds
on the norms of an $r-$circulant matrix with the generalized $k-$Horadam
numbers. Also they established the determinant and eigenvalues of this
matrix. For related works, we refer to \cite{lind, nalli, amara, bahsi,
pentti}. Recently, Kome and Yazlik \cite{kome} have obtained some upper and
lower bounds for the spectral norms of the $r-$circulant matrices whose
entries are bi-periodic Fibonacci and Lucas numbers. Here, we examine upper
and lower bounds for the spectral norms of $r-$circulant matrices with the
generalized bi-periodic Fibonacci numbers. Also, we obtain the eigenvalues
and determinants of these matrices.

\section{Main results}

Throughout this section we let $a,b$, and $w_{1}$ to be positive integers
and let $w_{0}$ to be a nonnegative integer unless otherwise is stated.

\begin{definition}
The $r-$circulant matrix $W_{r}$ with the generalized bi-periodic Fibonacci
numbers is defined as%
\begin{eqnarray*}
&& \\
W_{r} &:&=\!{\footnotesize \left[ \!%
\begin{array}{ccccc}
\left( \frac{a}{b}\right) ^{\frac{\zeta \left( 0\right) }{2}}w_{0} & \left( 
\frac{a}{b}\right) ^{\frac{\zeta \left( 1\right) }{2}}w_{1} & \left( \frac{a%
}{b}\right) ^{\frac{\zeta \left( 2\right) }{2}}w_{2} & \cdots & \left( \frac{%
a}{b}\right) ^{\frac{\zeta \left( n-1\right) }{2}}w_{n-1} \\[6pt] 
r\left( \frac{a}{b}\right) ^{\frac{\zeta \left( n-1\right) }{2}}w_{n-1} & 
\left( \frac{a}{b}\right) ^{\frac{\zeta \left( 0\right) }{2}}w_{0} & \left( 
\frac{a}{b}\right) ^{\frac{\zeta \left( 1\right) }{2}}w_{1} & \cdots & 
\left( \frac{a}{b}\right) ^{\frac{\zeta \left( n-2\right) }{2}}w_{n-2} \\%
[6pt] 
r\left( \frac{a}{b}\right) ^{\frac{\zeta \left( n-2\right) }{2}}w_{n-2} & 
r\left( \frac{a}{b}\right) ^{\frac{\zeta \left( n-1\right) }{2}}w_{n-1} & 
\left( \frac{a}{b}\right) ^{\frac{\zeta \left( 0\right) }{2}}w_{0} & \cdots
& \left( \frac{a}{b}\right) ^{\frac{\zeta \left( n-3\right) }{2}}w_{n-3} \\%
[6pt] 
\vdots & \vdots & \vdots & \ddots & \vdots \\[6pt] 
r\left( \frac{a}{b}\right) ^{\frac{\zeta \left( 1\right) }{2}}w_{1} & 
r\left( \frac{a}{b}\right) ^{\frac{\zeta \left( 2\right) }{2}}w_{2} & 
r\left( \frac{a}{b}\right) ^{\frac{\zeta \left( 3\right) }{2}}w_{3} & \cdots
& \left( \frac{a}{b}\right) ^{\frac{\zeta \left( 0\right) }{2}}w_{0}%
\end{array}%
\!\right] .} \\
&&
\end{eqnarray*}%
In accord with the notational convention introduced in the previous section,
we can write $W_{r}$ as follows:%
\begin{equation*}
W_{r}=\func{circ}_{r,n}\left[ \left( \frac{a}{b}\right) ^{\frac{\zeta \left(
0\right) }{2}}w_{0},\,\left( \frac{a}{b}\right) ^{\frac{\zeta \left(
1\right) }{2}}w_{1}\,,\ldots ,\,\left( \frac{a}{b}\right) ^{\frac{\zeta
\left( n-1\right) }{2}}w_{n-1}\right] .
\end{equation*}
\end{definition}

\begin{lemma}
\label{Sums-of-squares1}For $n>1$, we have 
\begin{eqnarray*}
\sum_{k=1}^{n}\left( \frac{a}{b}\right) ^{\zeta \left( k\right) }w_{k}^{2}
&=&\frac{1}{b}\left( w_{n}w_{n+1}-w_{0}w_{1}\right) . \\
&&
\end{eqnarray*}
\end{lemma}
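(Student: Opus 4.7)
The plan is to prove the identity by telescoping the right-hand side and then matching terms with the left-hand side via the recurrence \eqref{1}. Write
\[
\frac{1}{b}\bigl(w_n w_{n+1} - w_0 w_1\bigr) \;=\; \frac{1}{b}\sum_{k=1}^{n}\bigl(w_k w_{k+1} - w_{k-1} w_k\bigr) \;=\; \frac{1}{b}\sum_{k=1}^{n} w_k\bigl(w_{k+1} - w_{k-1}\bigr),
\]
so the whole task reduces to evaluating $w_{k+1} - w_{k-1}$.

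From the recurrence $w_{k+1} = a^{\zeta(k+2)} b^{\zeta(k+1)} w_k + w_{k-1}$, I get
\[
w_{k+1} - w_{k-1} \;=\; a^{\zeta(k+2)} b^{\zeta(k+1)}\, w_k.
\]
Now I would use the two elementary parity identities $\zeta(k+2) = \zeta(k)$ and $\zeta(k+1) = 1 - \zeta(k)$, which follow immediately from the definition $\zeta(m) = (1-(-1)^m)/2$. Substituting,
\[
w_{k+1} - w_{k-1} \;=\; a^{\zeta(k)}\, b^{\,1-\zeta(k)}\, w_k.
\]
Plugging this back into the telescoped expression turns the $k$th summand into
\[
\frac{1}{b}\, w_k \cdot a^{\zeta(k)} b^{\,1-\zeta(k)} w_k \;=\; a^{\zeta(k)} b^{-\zeta(k)} w_k^2 \;=\; \Bigl(\tfrac{a}{b}\Bigr)^{\zeta(k)} w_k^2,
\]
which is exactly the $k$th term of the left-hand side, completing the argument.

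There is no genuine obstacle here, only a mild bookkeeping step: the coefficient $a^{\zeta(k+1)} b^{\zeta(k)}$ in the recurrence is indexed differently from the $(a/b)^{\zeta(k)/2}$ appearing in the target sum, so one must be careful to shift indices correctly. The $\zeta$-formalism handles the even/odd case split in a single line, avoiding the need for a two-case induction. (Alternatively, the identity can be proven by induction on $n$, with the base case $n=1$ using $w_2 - w_0 = aw_1$ and the inductive step using exactly the same computation $w_{n+1}-w_{n-1} = a^{\zeta(n)}b^{1-\zeta(n)}w_n$; but telescoping is cleaner.)
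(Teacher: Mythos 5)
Your proof is correct, and it takes a genuinely different route from the paper's. The paper proves the lemma analytically: it substitutes the Binet formula $w_{k}=\frac{a^{\zeta (k+1)}}{(ab)^{\lfloor k/2\rfloor }}(\mathbf{A}\alpha ^{k}-\mathbf{B}\beta ^{k})$, works out $w_{k}^{2}$ separately for even and odd $k$, recombines the two cases into the single expression $a^{-2}(a/b)^{\zeta (k)}w_{k}^{2}=\mathbf{A}^{2}(\alpha ^{2}/ab)^{k}+\mathbf{B}^{2}(\beta ^{2}/ab)^{k}-2\mathbf{AB}(-1)^{k}$, sums the resulting geometric series, and then matches the answer against a parallel Binet-formula expansion of $w_{n}w_{n+1}$, using identities such as $\alpha\beta=-ab$ and $\mathbf{A}^{2}\alpha+\mathbf{B}^{2}\beta-\mathbf{AB}\,ab=\frac{1}{a}w_{0}w_{1}$ along the way. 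Your argument instead telescopes $\frac{1}{b}(w_{n}w_{n+1}-w_{0}w_{1})=\frac{1}{b}\sum_{k=1}^{n}w_{k}(w_{k+1}-w_{k-1})$ and reads off $w_{k+1}-w_{k-1}=a^{\zeta (k+2)}b^{\zeta (k+1)}w_{k}=a^{\zeta (k)}b^{1-\zeta (k)}w_{k}$ directly from the recurrence \eqref{1} (valid for every $k\geq 1$ appearing in the sum), so each summand collapses to $(a/b)^{\zeta (k)}w_{k}^{2}$; your parity identities $\zeta(k+2)=\zeta(k)$ and $\zeta(k+1)=1-\zeta(k)$ are immediate from the definition of $\zeta$, and the bookkeeping checks out. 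What your approach buys is brevity and generality: it avoids the Binet formula entirely, hence needs no case split on parity, no geometric-sum evaluation, and no implicit assumption that $\alpha\neq\beta$; only $b\neq 0$ is used. What the paper's approach buys is that the intermediate formulas (the closed form for $w_{k}^{2}$ and for $w_{n}w_{n+1}$ in terms of $\mathbf{A},\mathbf{B},\alpha,\beta$) are reused in spirit in the later eigenvalue computation, so the machinery is not wasted there.
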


\begin{proof}
Recall that the Binet formula of the sequence $\{w_{k}\}$ is given by 
\begin{equation*}
w_{k}=\frac{a^{\zeta \left( k+1\right) }}{\left( ab\right) ^{\left\lfloor 
\frac{k}{2}\right\rfloor }}\left( \mathbf{A}\alpha ^{k}-\mathbf{B}\beta
^{k}\right) .
\end{equation*}%
Since $\zeta (n)+\zeta (n+1)=1$ and $\lfloor n/2\rfloor +\lfloor
(n+1)/2\rfloor =n$, we have 
\begin{eqnarray}
w_{n}w_{n+1} &=&\frac{a^{\zeta \left( n+1\right) +\zeta \left( n\right) }}{%
\left( ab\right) ^{\left\lfloor \frac{n}{2}\right\rfloor +\left\lfloor \frac{%
n+1}{2}\right\rfloor }}\left( \mathbf{A}\alpha ^{n}-\mathbf{B}\beta
^{n}\right) \left( \mathbf{A}\alpha ^{n+1}-\mathbf{B}\beta ^{n+1}\right) 
\notag \\
&=&\frac{a}{\left( ab\right) ^{n}}\left[ \mathbf{A}^{2}\alpha ^{2n+1}-%
\mathbf{AB}\left( \alpha \beta \right) ^{n}\left( \alpha +\beta \right) +%
\mathbf{B}^{2}\beta ^{2n+1}\right]  \notag \\
&=&\frac{a}{\left( ab\right) ^{n}}\left[ \mathbf{A}^{2}\alpha ^{2n+1}+%
\mathbf{B}^{2}\beta ^{2n+1}-\mathbf{AB}\left( -ab\right) ^{n}\left(
ab\right) \right] .  \label{Eqn1}
\end{eqnarray}%
On the other hand, we have 
\begin{equation*}
\begin{split}
w_{k}^{2}& =\frac{a^{2\zeta \left( k+1\right) }}{\left( ab\right)
^{2\left\lfloor \frac{k}{2}\right\rfloor }}\left[ \mathbf{A}\alpha ^{k}-%
\mathbf{B}\beta ^{k}\right] ^{2} \\
& =\frac{a^{2\zeta \left( k+1\right) }}{\left( ab\right) ^{2\left\lfloor 
\frac{k}{2}\right\rfloor }}\left[ \mathbf{A}^{2}\alpha ^{2k}+\mathbf{B}%
^{2}\beta ^{2k}-2\mathbf{AB}\left( \alpha \beta \right) ^{k}\right] .
\end{split}%
\end{equation*}%
Now, if $k$ is even, we get 
\begin{equation*}
w_{k}^{2}=\frac{a^{2}}{\left( ab\right) ^{k}}\left[ \mathbf{A}^{2}\alpha
^{2k}+\mathbf{B}^{2}\beta ^{2k}-2\mathbf{AB}\left( \alpha \beta \right) ^{k}%
\right] ,
\end{equation*}%
and if $k$ is odd, we get 
\begin{equation*}
w_{k}^{2}=\frac{ab}{\left( ab\right) ^{k}}\left[ \mathbf{A}^{2}\alpha ^{2k}+%
\mathbf{B}^{2}\beta ^{2k}-2\mathbf{AB}\left( \alpha \beta \right) ^{k}\right]
.
\end{equation*}%
Since $\alpha \beta =-ab$, and 
\begin{equation*}
a^{2}\left( \frac{b}{a}\right) ^{\zeta (k)}=\left\{ 
\begin{array}{cl}
a^{2}, & \text{if $k$ is even} \\ 
ab, & \text{if $k$ is odd}%
\end{array}%
\right.
\end{equation*}%
we can write 
\begin{equation*}
w_{k}^{2}=a^{2}\left( \frac{b}{a}\right) ^{\zeta \left( k\right) }\left[ 
\mathbf{A}^{2}\left( \frac{\alpha ^{2}}{ab}\right) ^{k}+\mathbf{B}^{2}\left( 
\frac{\beta ^{2}}{ab}\right) ^{k}-2\mathbf{AB}\left( -1\right) ^{k}\right] ,
\end{equation*}%
or equivalently, 
\begin{equation}
a^{-2}\left( \frac{a}{b}\right) ^{\zeta \left( k\right) }w_{k}^{2}=\mathbf{A}%
^{2}\left( \frac{\alpha ^{2}}{ab}\right) ^{k}+\mathbf{B}^{2}\left( \frac{%
\beta ^{2}}{ab}\right) ^{k}-2\mathbf{AB}\left( -1\right) ^{k}.  \label{Eqn2}
\end{equation}%
By using the geometric sum formula, it can be seen that 
\begin{equation*}
\sum_{k=1}^{n}\left( \frac{\alpha ^{2}}{ab}\right) ^{k}=\frac{\alpha ^{2n+1}%
}{(ab)^{n+1}}-\frac{\alpha }{ab}\quad \text{and}\quad \sum_{k=1}^{n}\left( 
\frac{\beta ^{2}}{ab}\right) ^{k}=\frac{\beta ^{2n+1}}{(ab)^{n+1}}-\frac{%
\beta }{ab}.
\end{equation*}%
Now we take the summation of both sides of Equation \ref{Eqn2} from $1$ to $%
k $:

\begin{equation*}
\begin{split}
& \sum_{k=1}^{n}a^{-2}\left( \frac{a}{b}\right) ^{\zeta \left( k\right)
}w_{k}^{2} \\
& =\mathbf{A}^{2}\sum_{k=1}^{n}\left( \frac{\alpha ^{2}}{ab}\right) ^{k}+%
\mathbf{B}^{2}\sum_{k=1}^{n}\left( \frac{\beta ^{2}}{ab}\right) ^{k}-\mathbf{%
AB}\sum_{k=1}^{n}2\left( -1\right) ^{k} \\
& =\mathbf{A}^{2}\left[ \frac{\alpha ^{2n+1}}{(ab)^{n+1}}-\frac{\alpha }{ab}%
\right] +\mathbf{B}^{2}\left[ \frac{\beta ^{2n+1}}{(ab)^{n+1}}-\frac{\beta }{%
ab}\right] -\mathbf{AB}\left[ \left( -1\right) ^{n}-1\right] \\
& =\frac{1}{ab}\left[ \mathbf{A}^{2}\alpha ^{2n+1}\left( \frac{1}{ab}\right)
^{n}+\mathbf{B}^{2}\beta ^{2n+1}\left( \frac{1}{ab}\right) ^{n}-\mathbf{A}%
^{2}\alpha -\mathbf{B}^{2}\beta -\mathbf{AB}ab\left[ \left( -1\right) ^{n}-1%
\right] \right] .
\end{split}%
\end{equation*}

\noindent By taking Equation \ref{Eqn1} into account in the last line of the
equation above, we get the desired result: 
\begin{eqnarray*}
\sum_{k=1}^{n}a^{-2}\left( \frac{a}{b}\right) ^{\zeta \left( k\right)
}w_{k}^{2} &=& \frac{1}{ab}\frac{1}{a}\left( w_{n}w_{n+1}\right) + \mathbf{AB%
}-\frac{\mathbf{A}^{2}\alpha }{ab}-\frac{\mathbf{B}^{2}\beta }{ab} \\
&=&\frac{1}{ab}\left[ \frac{1}{a}\left( w_{n}w_{n+1}\right) +\mathbf{AB}
\left( ab\right) -\left( \mathbf{A}^{2}\alpha +\mathbf{B}^{2}\beta \right) %
\right] \\
&=&\frac{1}{ab}\left[ \frac{1}{a}\left( w_{n}w_{n+1}\right) -\frac{1}{a}
w_{0}w_{1}\right] \\
&=&\frac{1}{a^{2}b}\left[ w_{n}w_{n+1} -w_{0}w_{1}\right] .
\end{eqnarray*}
\end{proof}

An immediate result of Lemma \ref{Sums-of-squares1} is the following.

\begin{corollary}
\label{Sums-of-squares2} For $n>0$, the following equality holds: 
\begin{equation*}
\sum_{k=0}^{n-1}\left( \frac{a}{b}\right) ^{\zeta \left( k\right) }w_{k}^{2}=%
\frac{1}{b}\left( w_{n}w_{n-1}-w_{0}w_{1}+bw_{0}^{2}\right) .
\end{equation*}
\end{corollary}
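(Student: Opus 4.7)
The plan is to derive the corollary directly from Lemma \ref{Sums-of-squares1} by an index shift, with no new identities required. The first step is to peel off the $k=0$ contribution from the sum. Since $\zeta(0)=0$, we have $(a/b)^{\zeta(0)} = 1$, so the $k=0$ term is simply $w_0^2$, giving the splitting
\begin{equation*}
\sum_{k=0}^{n-1}\left(\frac{a}{b}\right)^{\zeta(k)} w_k^2 \;=\; w_0^2 \;+\; \sum_{k=1}^{n-1}\left(\frac{a}{b}\right)^{\zeta(k)} w_k^2.
\end{equation*}

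Next I would apply Lemma \ref{Sums-of-squares1} with $n$ replaced by $n-1$. This rewrites the remaining sum as $\frac{1}{b}\bigl(w_{n-1}w_{n} - w_0 w_1\bigr)$, and substituting back and pulling $1/b$ out front gives
\begin{equation*}
w_0^2 + \frac{1}{b}\bigl(w_{n-1}w_{n} - w_0 w_1\bigr) \;=\; \frac{1}{b}\bigl(w_{n}w_{n-1} - w_0 w_1 + bw_0^2\bigr),
\end{equation*}
which is exactly the claimed identity.

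The only detail to mind is the edge case $n=1$, where the shifted application of Lemma \ref{Sums-of-squares1} involves an empty sum. Here the corollary reduces to a direct check: the left-hand side is $w_0^2$, while the right-hand side is $\frac{1}{b}(w_1 w_0 - w_0 w_1 + bw_0^2) = w_0^2$, so equality holds trivially. There is no substantive obstacle in this proof; the corollary is essentially a bookkeeping consequence of the lemma, with the roles of the endpoints $0$ and $n$ swapped via the shift $n \mapsto n-1$ and the addition of the $w_0^2$ boundary term.
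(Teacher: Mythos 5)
Your derivation is correct and is exactly the intended route: the paper states the corollary as an ``immediate result'' of Lemma \ref{Sums-of-squares1} with no written proof, and peeling off the $k=0$ term $w_0^2$ and applying the lemma with $n$ replaced by $n-1$ is precisely that omitted argument. Your check of the $n=1$ edge case (and the observation that the lemma's formula in fact persists at the boundary) is a sensible extra precaution, not a deviation.
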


\begin{remark}
If we take the initial conditions $w_{0}=0$ and $w_{1}=1$, we get the
identity%
\begin{equation*}
\sum_{k=1}^{n}\left( \frac{a}{b}\right) ^{\zeta \left( k\right) }q_{k}^{2}=%
\frac{1}{b}q_{n}q_{n+1},
\end{equation*}%
given in \cite{yayenie}. Similary, with the initial conditions $w_{0}=2$ and 
$w_{1}=b$, we get the identity%
\begin{equation*}
\sum_{k=1}^{n}\left( \frac{a}{b}\right) ^{\zeta \left( k\right) }p_{k}^{2}=%
\frac{1}{b}p_{n}p_{n+1}-2,
\end{equation*}%
given in \cite{tan1}.
\end{remark}

Now we are ready to provide lower and upper bounds on the spectral norm of
an $r-$circulant matrix with generalized bi-periodic Fibonacci numbers%
\begin{equation*}
W_{r}=\func{circ}_{r,n}\left[ \left( \frac{a}{b}\right) ^{\frac{\zeta \left(
0\right) }{2}}w_{0},\left( \frac{a}{b}\right) ^{\frac{\zeta \left( 1\right) 
}{2}}w_{1},\ldots ,\left( \frac{a}{b}\right) ^{\frac{\zeta \left( n-1\right) 
}{2}}w_{n-1}\right] .
\end{equation*}%
But first, we calculate the Frobenius norm.

\begin{lemma}
\label{Frobenus-norm} The Frobenius norm of $W_r$ is given by 
\begin{eqnarray*}
\left\Vert W_{r} \right\Vert _{F} &=&\sqrt{\sum_{k=0}^{n-1}\left( n+k\left(
\left\vert r\right\vert ^{2}-1\right) \right) \left( \frac{a}{b}\right)
^{\zeta \left( k\right) }w_{k}^{2}}.
\end{eqnarray*}
\end{lemma}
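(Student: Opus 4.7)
The plan is to compute $\|W_r\|_F$ directly from the definition of the Frobenius norm by bookkeeping how many times each distinct scalar appears in $W_r$ and with what prefactor. Writing $c_k := \left(\tfrac{a}{b}\right)^{\zeta(k)/2} w_k$ for brevity, $W_r$ has $(i,j)$-entry equal to $c_{j-i}$ when $j \geq i$ and $r\, c_{n+j-i}$ when $j < i$, so every entry is either $c_k$ or $r c_k$ for exactly one $k \in \{0, 1, \ldots, n-1\}$.

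Next I would count, for each fixed $k$, the number of positions in $W_r$ that equal $c_k$ and the number that equal $r c_k$. For $k = 0$ the entry $c_0$ occupies exactly the main diagonal, so it appears $n$ times with coefficient $1$ and $0$ times with coefficient $r$. For $1 \leq k \leq n-1$, the condition $j - i = k$ has $n-k$ solutions with $j > i$, contributing $n - k$ copies of $c_k$; the condition $n + j - i = k$, equivalently $i - j = n - k$, has $k$ solutions with $i > j$, contributing $k$ copies of $r c_k$.

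Summing the squared absolute values of all entries then gives
\begin{equation*}
\|W_r\|_F^{2} = n\,|c_0|^{2} + \sum_{k=1}^{n-1}\Bigl((n-k)\,|c_k|^{2} + k\,|r|^{2}\,|c_k|^{2}\Bigr) = \sum_{k=0}^{n-1}\bigl(n + k(|r|^{2} - 1)\bigr)|c_k|^{2},
\end{equation*}
where the $k=0$ summand is absorbed using $n + 0\cdot(|r|^{2}-1) = n$. Substituting $|c_k|^{2} = \left(\tfrac{a}{b}\right)^{\zeta(k)} w_k^{2}$, which is valid since the standing assumption makes $a$, $b$ and each $w_k$ positive real, and taking a square root produces the stated formula.

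I do not expect any serious obstacle: the argument is purely combinatorial bookkeeping, and Lemma \ref{Sums-of-squares1} and Corollary \ref{Sums-of-squares2} are not needed here (they will be the tools used afterwards to simplify and bound the resulting expression when it is passed through the Zielke inequality \eqref{*} to estimate $\|W_r\|_2$). The only point requiring mild care is keeping the two cases $j \geq i$ and $j < i$ separate so that the factor $|r|^{2}$ is attached to precisely $k$ (not $n-k$) of the copies of $c_k$.
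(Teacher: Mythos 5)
Your proof is correct and follows essentially the same route as the paper: the paper's one-line argument is exactly this count of $n-k$ unscaled and $k$ $r$-scaled occurrences of each $\left(\frac{a}{b}\right)^{\zeta(k)/2}w_k$, which you simply spell out more carefully. You are also right that the citation of Lemma \ref{Sums-of-squares1} and Corollary \ref{Sums-of-squares2} is not needed for this counting step; those enter only later when the Frobenius norm is converted into the closed form involving $\Delta$.
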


\begin{proof}
By using Lemma \ref{Sums-of-squares1} and Corollary \ref{Sums-of-squares2},
it is clear that 
\begin{eqnarray*}
\left\Vert W_{r}\right\Vert _{F} &=&\sqrt{\sum_{k=0}^{n-1}\left( n-k\right)
\left( \frac{a}{b}\right) ^{\zeta \left( k\right)
}w_{k}^{2}+\sum_{k=1}^{n-1}k\left\vert r\right\vert ^{2}\left( \frac{a}{b}%
\right) ^{\zeta \left( k\right) }w_{k}^{2}} \\
&=&\sqrt{\sum_{k=0}^{n-1}\left( n+k\left( \left\vert r\right\vert
^{2}-1\right) \right) \left( \frac{a}{b}\right) ^{\zeta \left( k\right)
}w_{k}^{2}}.
\end{eqnarray*}
\end{proof}

\begin{theorem}
\label{t1}Let $\Delta :=w_{n-1}w_{n}-w_{0}w_{1}+bw_{0}^{2}$. Then the
following inequalities hold for the $r$-circulant matrix $W_{r}$:

\noindent $(i)$ If $|r|\geq 1$, then%
\begin{equation*}
\sqrt{\frac{\Delta }{b}}\leq \left\Vert W_{r}\right\Vert _{2}\leq \sqrt{%
\left( \left( n-1\right) \left\vert r\right\vert ^{2}+1\right) \frac{\Delta 
}{b}}.
\end{equation*}%
$\noindent (ii)$ If $|r|<1$, then 
\begin{equation*}
\left\vert r\right\vert \sqrt{\frac{\Delta }{b}}\leq \left\Vert
W_{r}\right\Vert _{2}\leq \sqrt{n\frac{\Delta }{b}}.
\end{equation*}
\end{theorem}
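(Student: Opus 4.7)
The plan is to combine the Frobenius-norm expression from Lemma \ref{Frobenus-norm} with the sum identity from Corollary \ref{Sums-of-squares2} (which gives $\sum_{k=0}^{n-1}(a/b)^{\zeta(k)}w_k^{2} = \Delta/b$), and then exploit the sandwich inequalities (\ref{*}) and the Hadamard-product bound (\ref{**}). The two cases split naturally according to whether the weight $n+k(|r|^{2}-1)$ in the Frobenius formula can be bounded uniformly above or uniformly below by $n$.

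For case $(i)$, $|r|\geq 1$: since $n+k(|r|^{2}-1)\geq n$ for every $0\leq k\leq n-1$, one gets $\|W_r\|_F^{2}\geq n\Delta/b$, and the left-hand side of (\ref{*}) immediately delivers the lower bound $\|W_r\|_2\geq\sqrt{\Delta/b}$. For the upper bound I would write $W_r=A\circ B$ as the Hadamard product of
\begin{equation*}
A_{ij}=\begin{cases} 1, & j\geq i\\ r, & j<i\end{cases},\qquad B_{ij}=\begin{cases} (a/b)^{\zeta(j-i)/2}\,w_{j-i}, & j\geq i\\ (a/b)^{\zeta(n+j-i)/2}\,w_{n+j-i}, & j<i\end{cases}.
\end{equation*}
Row $i$ of $A$ has squared length $(n-i+1)+(i-1)|r|^{2}$, which for $|r|\geq 1$ is maximized at $i=n$, so $r_{1}(A)=\sqrt{1+(n-1)|r|^{2}}$. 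Each column of $B$ is a cyclic permutation of $((a/b)^{\zeta(k)/2}w_k)_{k=0}^{n-1}$, so by Corollary \ref{Sums-of-squares2} every column has squared norm $\Delta/b$, giving $c_{1}(B)=\sqrt{\Delta/b}$. Plugging into (\ref{**}) produces the claimed bound.

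For case $(ii)$, $|r|<1$: the same weight now satisfies $n+k(|r|^{2}-1)\leq n$ for all $k$, so $\|W_r\|_F^{2}\leq n\Delta/b$ and the right-hand inequality of (\ref{*}) directly yields $\|W_r\|_2\leq\sqrt{n\Delta/b}$. For the matching lower bound, the weight is minimized at $k=n-1$, which gives $\|W_r\|_F^{2}\geq(1+(n-1)|r|^{2})\Delta/b$. A one-line check shows $(1+(n-1)|r|^{2})/n\geq|r|^{2}$ whenever $|r|\leq 1$, so $\|W_r\|_F/\sqrt{n}\geq|r|\sqrt{\Delta/b}$, and the left half of (\ref{*}) closes the case.

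The only genuinely non-routine step is locating the correct Hadamard factorization used in the upper bound of case $(i)$: one must isolate the factor $r$ in a companion matrix whose maximum row norm is precisely $\sqrt{1+(n-1)|r|^{2}}$, while the complementary factor inherits the cyclic column structure that forces every column to have squared $\ell^{2}$-norm equal to $\Delta/b$ via Corollary \ref{Sums-of-squares2}. Once that decomposition is written down, the remainder of the argument is mechanical bookkeeping with (\ref{*}) and (\ref{**}).
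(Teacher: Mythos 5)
Your proof is correct and follows essentially the same route as the paper: the lower bounds come from the Frobenius-norm formula of Lemma \ref{Frobenus-norm} together with Corollary \ref{Sums-of-squares2} and the left half of (\ref{*}), and the upper bound in case $(i)$ uses exactly the paper's Hadamard factorization $W_{r}=U\circ W$ with Mathias's inequality (\ref{**}). The only (harmless) deviation is the upper bound in case $(ii)$, where you deduce $\left\Vert W_{r}\right\Vert _{2}\leq \left\Vert W_{r}\right\Vert _{F}\leq \sqrt{n\Delta /b}$ directly from the weight estimate rather than reusing the Hadamard product with $r_{1}(U)=\sqrt{n}$; both arguments are valid.
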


\begin{proof}
\noindent $(i)$ Let $\left\vert r\right\vert \geq 1.$ From Corollary \ref%
{Sums-of-squares2} and Lemma \ref{Frobenus-norm}, we have 
\begin{equation*}
\left\Vert W_{r}\right\Vert _{F}\geq \sqrt{\sum_{k=0}^{n-1}n\left( \frac{a}{b%
}\right) ^{\zeta \left( k\right) }w_{k}^{2}}=\sqrt{n\frac{\Delta }{b}}.
\end{equation*}%
Therefore, we can write 
\begin{equation*}
\frac{1}{\sqrt{n}}\left\Vert W_{r}\right\Vert _{F}\geq \sqrt{\frac{\Delta }{b%
}}.
\end{equation*}

From the inequality (\ref{*}), we obtain 
\begin{equation*}
\sqrt{\frac{\Delta }{b}}\leq \left\Vert W_{r}\right\Vert _{2}.
\end{equation*}

In order to provide an upper bound for the spectral norm of $W_{r}$, we
consider the following matrices: 
\begin{eqnarray*}
&& \\
U &:&=\left[ 
\begin{array}{ccccc}
1 & 1 & \cdots & 1 & 1 \\ 
r & 1 & \cdots & 1 & 1 \\ 
\vdots & \vdots & \ddots & \vdots & \vdots \\ 
r & r & \cdots & 1 & 1 \\ 
r & r & \cdots & r & 1%
\end{array}%
\right] , \\
&&
\end{eqnarray*}%
\begin{eqnarray*}
&& \\
W &=&{\small \left[ 
\begin{array}{ccccc}
\left( \frac{a}{b}\right) ^{\frac{\zeta \left( 0\right) }{2}}w_{0} & \left( 
\frac{a}{b}\right) ^{\frac{\zeta \left( 1\right) }{2}}w_{1} & \left( \frac{a%
}{b}\right) ^{\frac{\zeta \left( 2\right) }{2}}w_{2} & \cdots & \left( \frac{%
a}{b}\right) ^{\frac{\zeta \left( n-1\right) }{2}}w_{n-1} \\ 
\left( \frac{a}{b}\right) ^{\frac{\zeta \left( n-1\right) }{2}}w_{n-1} & 
\left( \frac{a}{b}\right) ^{\frac{\zeta \left( 0\right) }{2}}w_{0} & \left( 
\frac{a}{b}\right) ^{\frac{\zeta \left( 1\right) }{2}}w_{1} & \cdots & 
\left( \frac{a}{b}\right) ^{\frac{\zeta \left( n-2\right) }{2}}w_{n-2} \\ 
\left( \frac{a}{b}\right) ^{\frac{\zeta \left( n-2\right) }{2}}w_{n-2} & 
\left( \frac{a}{b}\right) ^{\frac{\zeta \left( n-1\right) }{2}}w_{n-1} & 
\left( \frac{a}{b}\right) ^{\frac{\zeta \left( 0\right) }{2}}w_{0} & \cdots
& \left( \frac{a}{b}\right) ^{\frac{\zeta \left( n-3\right) }{2}}w_{n-3} \\ 
\vdots & \vdots & \vdots & \ddots & \vdots \\ 
\left( \frac{a}{b}\right) ^{\frac{\zeta \left( 1\right) }{2}}w_{1} & \left( 
\frac{a}{b}\right) ^{\frac{\zeta \left( 2\right) }{2}}w_{2} & \left( \frac{a%
}{b}\right) ^{\frac{\zeta \left( 3\right) }{2}}w_{3} & \cdots & \left( \frac{%
a}{b}\right) ^{\frac{\zeta \left( 0\right) }{2}}w_{0}%
\end{array}%
\right] }. \\
&&
\end{eqnarray*}

Note that $W=\func{circ}_{1,n}\left[ \left( \frac{a}{b}\right) ^{\frac{\zeta
\left( 0\right) }{2}}w_{0},\,\left( \frac{a}{b}\right) ^{\frac{\zeta \left(
1\right) }{2}}w_{1}\,,\ldots ,\,\left( \frac{a}{b}\right) ^{\frac{\zeta
\left( n-1\right) }{2}}w_{n-1}\right] .$ It is clear that $W_{r}=U\circ W$
where $\circ $ denotes the Hadamard product. Now we calculate the maximum
row length norm $r_{1}(U)$ of $U$, and maximum column length norm $c_{1}(W)$
of $W$. Since $|r|\geq 1$, we have

\begin{equation*}
r_{1}\left( U\right) =\max_{1\leq i\leq n}\sqrt{\sum_{j=1}^{n}\left\vert
u_{ij}\right\vert ^{2}}=\sqrt{\sum_{j=1}^{n}\left\vert u_{nj}\right\vert ^{2}%
}=\sqrt{\left( n-1\right) \left\vert r\right\vert ^{2}+1},
\end{equation*}%
and%
\begin{equation*}
c_{1}\left( W\right) =\max_{1\leq j\leq n}\sqrt{\sum_{i=1}^{n}\left\vert
w_{ij}\right\vert ^{2}}=\sqrt{\sum_{k=0}^{n-1}\left( \frac{a}{b}\right)
^{\zeta \left( k\right) }w_{k}^{2}}=\sqrt{\frac{\Delta }{b}}.\text{ \ \ \ \
\ \ \ }
\end{equation*}%
Using the above quantities, we obtain 
\begin{equation*}
\left\Vert W_{r}\right\Vert _{2}=\left\Vert U\circ W\right\Vert _{2} \\
\leq r_{1}\left( U\right) c_{1}\left( W\right) \\
=\sqrt{\left( (n-1)|r|^{2}+1\right) \frac{\Delta }{b}}.
\end{equation*}

\noindent $(ii)$ Let $|r|<1.$ Suppose $k$ is an integer with $0\leq k\leq
n-1 $. Since $|r|^{2}-1<0$, the minimum of $n+k(|r|^{2}-1)$ is achieved when 
$k=n-1$. So, for $k=n-1$ we have $n+k(|r|^{2}-1)=n|r|^{2}-|r|^{2}+1\geq
n|r|^{2}$. Then it follows that 
\begin{equation*}
n+k(|r|^{2}-1)\geq n|r|^{2}
\end{equation*}%
for each $k$ with $0\leq k\leq n-1$. Therefore, we can write 
\begin{eqnarray*}
\left\Vert W_{r}\right\Vert _{F} &=&\sqrt{\sum_{k=0}^{n-1}\left( n+k\left(
\left\vert r\right\vert ^{2}-1\right) \right) \left( \frac{a}{b}\right)
^{\zeta \left( k\right) }w_{k}^{2}} \\
&\geq &\sqrt{\sum_{k=0}^{n-1}n\left\vert r\right\vert ^{2}\left( \frac{a}{b}%
\right) ^{\zeta \left( k\right) }w_{k}^{2}}.
\end{eqnarray*}%
Then it follows that 
\begin{equation*}
\frac{1}{\sqrt{n}}\left\Vert W_{r}\right\Vert _{F}\geq \left\vert
r\right\vert \sqrt{\frac{\Delta }{b}}.
\end{equation*}%
Using the inequality (\ref{*}) again, we get 
\begin{equation*}
\left\Vert W_{r}\right\Vert _{2}\geq \left\vert r\right\vert \sqrt{\frac{%
\Delta }{b}}.
\end{equation*}

In order to provide an upper bound for the spectral norm of $W_{r}$, we
consider the matrices $U$ and $V$ defined above. Note that 
\begin{equation*}
r_{1}\left( U\right) =\max_{1\leq i\leq n}\sqrt{\sum_{j=1}^{n}\left\vert
u_{ij}\right\vert ^{2}}=\sqrt{\sum_{j=1}^{n}\left\vert u_{1j}\right\vert ^{2}%
}=\sqrt{n},\text{ \ \ \ \ \ \ \ \ \ }
\end{equation*}%
and 
\begin{equation*}
c_{1}\left( W\right) =\max_{1\leq j\leq n}\sqrt{\sum_{i=1}^{n}\left\vert
w_{ij}\right\vert ^{2}}=\sqrt{\sum_{k=0}^{n-1}\left( \frac{a}{b}\right)
^{\zeta \left( k\right) }w_{k}^{2}}=\sqrt{\frac{\Delta }{b}}.
\end{equation*}%
In conclusion, 
\begin{equation*}
\left\Vert W_{r}\right\Vert _{2}=\left\Vert U\circ W\right\Vert _{2}\leq
r_{1}\left( U\right) c_{1}\left( W\right) =\sqrt{n\frac{\Delta }{b}}.
\end{equation*}
\end{proof}

\begin{remark}
From Theorem \ref{t1}, we have the following results:

If we take $w_{0}=0,w_{1}=1,$ then we get\noindent\ the upper and lower
bounds for $r-$circulant matrix with bi-periodic Fibonacci numbers as:%
\begin{equation*}
\left. 
\begin{array}{c}
\sqrt{\frac{q_{n-1}q_{n}}{b}}\leq \left\Vert W_{r}\right\Vert _{2}\leq \sqrt{%
\left( \left( n-1\right) \left\vert r\right\vert ^{2}+1\right) \frac{%
q_{n-1}q_{n}}{b}},|r|\geq 1 \\ 
\left\vert r\right\vert \sqrt{\frac{q_{n-1}q_{n}}{b}}\leq \left\Vert
W_{r}\right\Vert _{2}\leq \sqrt{n\frac{q_{n-1}q_{n}}{b}}\text{ \ \ \ \ \ \ \
\ \ \ \ \ \ \ \ \ \ \ \ \ },|r|<1\text{\ }.\text{\ }%
\end{array}%
\right. \text{ \ \ \ \ \ \ \ \ \ \ \ \ \ \ \ \ \ \ \ \ \ \ \ \ \ \ \ \ \ \ \
\ \ \ \ \ \ \ \ \ \ \ \ \ \ \ \ \ \ \ \ \ \ \ \ \ \ \ \ \ \ \ \ \ \ \ \ \ \
\ \ \ \ \ \ \ \ \ \ \ \ \ \ \ \ \ \ \ \ \ \ \ \ \ \ \ \ \ \ \ \ \ \ \ \ \ \
\ \ \ \ \ \ }
\end{equation*}

If we take $w_{0}=2,w_{1}=b,$ then we get\noindent\ the upper and lower
bounds for $r-$circulant matrix with bi-periodic Lucas numbers as:%
\begin{equation*}
\left. 
\begin{array}{c}
\sqrt{\frac{p_{n-1}p_{n}}{b}+2}\leq \left\Vert W_{r}\right\Vert _{2}\leq 
\sqrt{\left( \left( n-1\right) \left\vert r\right\vert ^{2}+1\right) \left( 
\frac{p_{n-1}p_{n}}{b}+2\right) },|r|\geq 1 \\ 
\left\vert r\right\vert \sqrt{\frac{p_{n-1}p_{n}}{b}+2}\leq \left\Vert
W_{r}\right\Vert _{2}\leq \sqrt{n\left( \frac{p_{n-1}p_{n}}{b}+2\right) }%
\text{ \ \ \ \ \ \ \ \ \ \ \ \ \ \ \ \ \ \ \ \ },|r|<1.\text{\ \ \ }%
\end{array}%
\right. \text{ \ \ \ \ \ \ \ \ \ \ \ \ \ \ \ \ \ \ \ \ \ \ \ \ \ \ \ \ \ \ \
\ \ \ \ \ \ \ \ \ \ \ \ \ \ \ \ \ \ \ \ \ \ \ \ \ \ \ \ \ \ \ \ \ \ \ \ \ \
\ \ \ \ \ \ \ \ \ \ \ \ \ \ \ \ \ \ \ \ \ \ \ \ \ \ \ \ \ }
\end{equation*}

If we take $a=b=1,$ then we get\noindent\ the upper and lower bounds for $r-$%
circulant matrix with generalized Fibonacci numbers as:%
\begin{equation*}
\left. 
\begin{array}{c}
\sqrt{w_{n-1}w_{n}-w_{0}w_{1}+w_{0}^{2}}\leq \left\Vert W_{r}\right\Vert
_{2}\leq \sqrt{\left( \left( n-1\right) \left\vert r\right\vert
^{2}+1\right) \left( w_{n-1}w_{n}-w_{0}w_{1}+w\right) },|r|\geq 1 \\ 
\text{ \ }\left\vert r\right\vert \sqrt{w_{n-1}w_{n}-w_{0}w_{1}+w}\leq
\left\Vert W_{r}\right\Vert _{2}\leq \sqrt{n\left(
w_{n-1}w_{n}-w_{0}w_{1}+w\right) }\text{ \ \ \ \ \ \ \ \ \ \ \ \ \ \ \ \ \ \
\ },|r|<1.\text{\ \ \ }%
\end{array}%
\right. \text{ \ \ \ \ \ \ \ \ \ \ \ \ \ \ \ \ \ \ \ \ \ \ \ \ \ \ \ \ \ \ \
\ \ \ \ \ \ \ \ \ \ \ \ \ \ \ \ \ \ \ \ \ \ \ \ \ \ \ \ \ \ \ \ \ \ \ \ \ }
\end{equation*}
\end{remark}

\begin{theorem}
The eigenvalues of $W_{r}$ are 
\begin{equation*}
\lambda _{j}\left( W_{r}\right) =\frac{\left( \frac{a}{b}\right) ^{\frac{%
\zeta \left( n\right) }{2}}rw_{n}-w_{0}+\rho \omega ^{-j}\left( \left( \frac{%
a}{b}\right) ^{^{\frac{\zeta \left( n+1\right) }{2}}}rw_{n-1}+\left( \frac{a%
}{b}\right) ^{^{\frac{1}{2}}}\left( bw_{0}-w_{1}\right) \right) }{\rho
^{2}\omega ^{-2j}+\left( ab\right) ^{\frac{1}{2}}\rho \omega ^{-j}-1},
\end{equation*}%
where $\rho $ is any $n$th root of $r$, $\omega $ is any $n$th root of
unity, and $j=0,1,\ldots ,n-1.$
\end{theorem}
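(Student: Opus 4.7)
The plan is to apply the general eigenvalue formula (\ref{eigenvalue}) to $W_r$. Setting $y := \rho \omega^{-j}$ and $c_k := (a/b)^{\zeta(k)/2} w_k$, the task reduces to computing the partial sum
\begin{equation*}
\lambda_j(W_r) \;=\; \sum_{k=0}^{n-1} c_k\, y^k,
\end{equation*}
while remembering that $y^n = \rho^n \omega^{-jn} = r$, since $\rho^n = r$ and $\omega^n = 1$.

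The key observation is that the rescaling collapses the bi-periodic recurrence (\ref{1}) into a \emph{constant-coefficient} one:
\begin{equation*}
c_k \;=\; \sqrt{ab}\, c_{k-1} + c_{k-2}, \qquad k \geq 2.
\end{equation*}
I would verify this by a short parity case analysis: if $k$ is even, (\ref{1}) becomes $w_k = a w_{k-1} + w_{k-2}$, and after multiplying by $1 = (a/b)^{\zeta(k)/2}$ and rewriting $a w_{k-1}$ as $\sqrt{ab}\,(a/b)^{\zeta(k-1)/2} w_{k-1}$, the identity falls out; the odd case is analogous. This step is what makes a closed form possible: the associated generating function now has the quadratic denominator $1 - \sqrt{ab}\, y - y^2$, as opposed to the quartic denominator of $G(x)$ quoted in the introduction. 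I regard this as the only conceptually nontrivial step.

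With the recurrence in hand, the rest is bookkeeping. I would multiply $S_n := \sum_{k=0}^{n-1} c_k y^k$ by $y^2 + \sqrt{ab}\, y - 1$, reindex, and observe that each interior coefficient $c_{m-2} + \sqrt{ab}\, c_{m-1} - c_m$ with $2 \leq m \leq n-1$ vanishes by the recurrence. The surviving boundary contributions are $-w_0$ and $(\sqrt{ab}\, c_0 - c_1)\, y = \sqrt{a/b}(bw_0 - w_1)\, y$ at the low end, and $(c_{n-2} + \sqrt{ab}\, c_{n-1})\, y^n + c_{n-1}\, y^{n+1} = c_n\, y^n + c_{n-1}\, y^{n+1}$ at the high end (using the recurrence once more to combine the first pair). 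Substituting $y^n = r$ and $y^{n+1} = r y$ yields
\begin{equation*}
(y^2 + \sqrt{ab}\, y - 1)\, S_n \;=\; r c_n - w_0 \;+\; y\bigl(r c_{n-1} + \sqrt{a/b}(bw_0 - w_1)\bigr).
\end{equation*}

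Expanding $c_n = (a/b)^{\zeta(n)/2} w_n$ and $c_{n-1} = (a/b)^{\zeta(n+1)/2} w_{n-1}$ (using that $\zeta(n-1) = \zeta(n+1)$), substituting $y = \rho \omega^{-j}$, and dividing through gives the stated formula. The only caveat is that the denominator $\rho^2 \omega^{-2j} + \sqrt{ab}\, \rho \omega^{-j} - 1$ must be nonzero for the division to be valid; this is a generic condition on $r$, and the identity extends to the exceptional values by continuity in $r$.
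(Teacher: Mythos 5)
Your proof is correct, but it takes a genuinely different route from the paper's. The paper substitutes the Binet formula $w_k=\frac{a^{\zeta(k+1)}}{(ab)^{\lfloor k/2\rfloor}}(\mathbf{A}\alpha^k-\mathbf{B}\beta^k)$ into the eigenvalue sum (\ref{eigenvalue}), evaluates two geometric series with ratios $\alpha\rho\omega^{-j}/\sqrt{ab}$ and $\beta\rho\omega^{-j}/\sqrt{ab}$, recombines using $\alpha\beta=-ab$, and only at the very end performs a parity case analysis to convert the $\mathbf{A},\mathbf{B},\alpha,\beta$ expression back into $w_n$ and $w_{n-1}$ (yielding the two displayed cases, which the unified $\zeta(n)$-exponent formula of the theorem packages together). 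You bypass Binet entirely: your observation that $c_k=(a/b)^{\zeta(k)/2}w_k$ satisfies the constant-coefficient recurrence $c_k=\sqrt{ab}\,c_{k-1}+c_{k-2}$ --- which I checked in both parities and is exactly the reason the rescaling appears in the definition of $W_r$ --- lets you multiply $S_n$ by $y^2+\sqrt{ab}\,y-1$ and telescope, and the boundary terms land directly on the stated formula with the $\zeta(n)$ and $\zeta(n+1)$ exponents, no case split needed. What your approach buys is a shorter, more self-contained computation that produces the unified formula in one pass and makes transparent why the quartic denominator of $G(x)$ collapses to a quadratic; what the paper's approach buys is reuse of the Binet machinery already set up for Lemma \ref{Sums-of-squares1}. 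Both arguments implicitly require $\rho^2\omega^{-2j}+\sqrt{ab}\,\rho\omega^{-j}-1\neq 0$ (equivalently $\rho\omega^{-j}\notin\{-\alpha/\sqrt{ab},\,-\beta/\sqrt{ab}\}$); you are the only one to flag this, and your continuity argument disposes of it.
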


\begin{proof}
From (\ref{eigenvalue}), we have%
\begin{equation*}
\lambda _{j}\left( W_{r}\right) =\sum_{k=0}^{n-1}\left( \frac{a}{b}\right) ^{%
\frac{\zeta \left( k\right) }{2}}w_{k}\rho ^{k}\omega ^{-kj}\text{ \ \ \ \ \
\ \ \ \ \ \ \ \ \ \ \ \ \ \ \ \ \ \ \ \ \ \ \ \ \ \ \ \ \ \ \ \ \ \ \ \ \ \
\ \ \ \ \ \ \ \ \ \ \ \ \ \ \ \ \ \ \ \ \ \ \ \ \ \ \ \ \ \ \ \ \ \ \ \ \ \
\ \ \ \ \ \ \ \ \ \ \ \ \ \ \ \ \ \ \ \ \ \ \ \ \ \ \ \ \ \ \ \ \ \ \ \ \ \
\ \ \ \ \ \ \ \ \ \ \ \ \ \ \ \ \ \ \ \ \ \ \ \ \ \ \ \ \ \ \ \ \ \ \ \ \ \ }
\end{equation*}%
\begin{equation*}
=\mathbf{A}a\sum_{k=0}^{n-1}\left( \frac{\alpha \rho \omega ^{-j}}{\left(
ab\right) ^{\frac{1}{2}}}\right) ^{k}-\mathbf{B}a\sum_{k=0}^{n-1}\left( 
\frac{\beta \rho \omega ^{-j}}{\left( ab\right) ^{\frac{1}{2}}}\right) ^{k}%
\text{ \ \ \ \ \ \ \ \ \ \ \ \ \ \ \ \ \ \ \ \ \ \ \ \ \ \ \ \ \ \ \ \ \ \ \
\ \ \ \ \ \ \ \ \ \ \ \ \ \ \ }
\end{equation*}%
\ \ \ \ \ \ \ \ \ \ \ \ \ \ \ \ \ \ \ \ \ \ \ \ \ \ \ \ \ \ \ 
\begin{equation*}
=\frac{a}{\left( ab\right) ^{\frac{n-1}{2}}}\left( \mathbf{A}\frac{\alpha
^{n}r-\left( ab\right) ^{\frac{n}{2}}}{\alpha \rho \omega ^{-j}-\left(
ab\right) ^{\frac{1}{2}}}-\mathbf{B}\frac{\beta ^{n}r-\left( ab\right) ^{%
\frac{n}{2}}}{\beta \rho \omega ^{-j}-\left( ab\right) ^{\frac{1}{2}}}%
\right) \text{ \ \ \ \ \ \ \ \ \ \ \ \ \ \ \ \ \ \ \ \ \ \ \ \ \ \ \ \ \ \ \
\ \ \ \ \ }
\end{equation*}%
\begin{equation*}
=\frac{a}{\left( ab\right) ^{\frac{n-1}{2}}\left( \alpha \rho \omega
^{-j}-\left( ab\right) ^{\frac{1}{2}}\right) \left( \beta \rho \omega
^{-j}-\left( ab\right) ^{\frac{1}{2}}\right) }\left( r\rho \omega
^{-j}\left( \alpha \beta \right) \left( \mathbf{A}\alpha ^{n-1}-\mathbf{B}%
\beta ^{n-1}\right) \right.
\end{equation*}%
\begin{equation*}
\left. -r\left( ab\right) ^{\frac{1}{2}}\left( \mathbf{A}\alpha ^{n}-\mathbf{%
B}\beta ^{n}\right) -\rho \omega ^{-j}\left( ab\right) ^{\frac{n}{2}}\left( 
\mathbf{A}\beta -\mathbf{B}\alpha \right) +\left( ab\right) ^{\frac{n+1}{2}%
}\left( \mathbf{A}-\mathbf{B}\right) \right) .\text{ \ \ \ \ }
\end{equation*}%
If $k$ is even, we obtain%
\begin{equation*}
\lambda _{j}\left( W_{r}\right) =\frac{rw_{n}-w_{0}+\left( \frac{a}{b}%
\right) ^{^{\frac{1}{2}}}\rho \omega ^{-j}\left( rw_{n-1}+\left(
bw_{0}-w_{1}\right) \right) }{\rho ^{2}\omega ^{-2j}+\left( ab\right) ^{%
\frac{1}{2}}\rho \omega ^{-j}-1}.\text{ \ \ \ \ \ \ \ \ \ \ \ \ }
\end{equation*}%
If $k$ is odd, we get%
\begin{equation*}
\lambda _{j}\left( W_{r}\right) =\frac{\left( \frac{a}{b}\right) ^{^{\frac{1%
}{2}}}rw_{n}-w_{0}+\rho \omega ^{-j}\left( rw_{n-1}+\left( \frac{a}{b}%
\right) ^{^{\frac{1}{2}}}\left( bw_{0}-w_{1}\right) \right) }{\rho
^{2}\omega ^{-2j}+\left( ab\right) ^{\frac{1}{2}}\rho \omega ^{-j}-1}.\text{
\ }
\end{equation*}
\end{proof}

\begin{theorem}
The determinant of $W_{r}$ is 
\begin{equation*}
\det \left( W_{r}\right) =\frac{\left( w_{0}-\left( \frac{a}{b}\right) ^{%
\frac{\zeta \left( n\right) }{2}}rw_{n}\right) ^{n}-r\left( \left( \frac{a}{b%
}\right) ^{^{\frac{\zeta \left( n+1\right) }{2}}}rw_{n-1}+\left( \frac{a}{b}%
\right) ^{^{\frac{1}{2}}}\left( bw_{0}-w_{1}\right) \right) ^{n}}{1-\left( 
\frac{a}{b}\right) ^{\frac{\zeta \left( n\right) }{2}}p_{n}r+\left(
-1\right) ^{n}r^{2}}.
\end{equation*}
\end{theorem}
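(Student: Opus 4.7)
The plan is to compute $\det(W_r)$ as the product $\prod_{j=0}^{n-1}\lambda_j(W_r)$ using the eigenvalue formula from the previous theorem. Writing each $\lambda_j=N_j/D_j$ with
\[
N_j=P+z_jQ,\qquad D_j=z_j^2+(ab)^{1/2}z_j-1,
\]
where $z_j=\rho\omega^{-j}$, $P=(a/b)^{\zeta(n)/2}rw_n-w_0$, and $Q=(a/b)^{\zeta(n+1)/2}rw_{n-1}+(a/b)^{1/2}(bw_0-w_1)$, the key tool is that $z_0,\ldots,z_{n-1}$ range over all $n$-th roots of $r$, so
\[
\prod_{j=0}^{n-1}(x-z_j)=x^n-r.
\]

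For the numerator, I would factor $N_j=Q(z_j+P/Q)$ (handling the degenerate case $Q=0$ separately), and apply the above identity at $x=-P/Q$ to obtain $\prod_jN_j=P^n-(-1)^nrQ^n=(-1)^n\bigl[(-P)^n-rQ^n\bigr]$. Since $-P=w_0-(a/b)^{\zeta(n)/2}rw_n$, this matches the numerator appearing in the claim up to an overall factor of $(-1)^n$.

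For the denominator, I would first factor the quadratic $z^2+(ab)^{1/2}z-1$: substituting $z=y/\sqrt{ab}$ converts it (up to a constant) into $y^2+aby-ab$, whose roots are $-\alpha$ and $-\beta$ since $\alpha,\beta$ satisfy $x^2-abx-ab=0$. Hence $D_j=(z_j+\alpha/\sqrt{ab})(z_j+\beta/\sqrt{ab})$, and a double application of the root-of-$r$ identity, combined with $\alpha\beta=-ab$, yields
\[
\prod_{j=0}^{n-1}D_j=(-1)^n-(-1)^n\,\frac{\alpha^n+\beta^n}{(ab)^{n/2}}\,r+r^2.
\]

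The main technical step is to recognize the middle coefficient as $(a/b)^{\zeta(n)/2}p_n$. This is where I expect the bookkeeping to require care: using the Binet formula $p_n=a^{\zeta(n)}(ab)^{-\lfloor n/2\rfloor}(\alpha^n+\beta^n)$ together with the parity identity $\lfloor n/2\rfloor+\zeta(n)/2=n/2$ (verified separately for $n$ even and odd), one checks that $(\alpha^n+\beta^n)/(ab)^{n/2}=(a/b)^{\zeta(n)/2}p_n$. Substituting gives $\prod_jD_j=(-1)^n\bigl[1-(a/b)^{\zeta(n)/2}p_n r+(-1)^nr^2\bigr]$, so the $(-1)^n$ factors in numerator and denominator cancel, and the stated formula for $\det(W_r)$ follows.
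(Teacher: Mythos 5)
Your proposal takes the same route as the paper---$\det(W_r)=\prod_{j=0}^{n-1}\lambda_j(W_r)$, which is in fact the paper's \emph{entire} proof---and correctly supplies all the details the paper omits: the identity $\prod_{j}(x-\rho\omega^{-j})=x^n-r$, the factorization $D_j=\bigl(z_j+\alpha/\sqrt{ab}\bigr)\bigl(z_j+\beta/\sqrt{ab}\bigr)$, and the cancellation of the overall $(-1)^n$. One small slip: the Binet formula should read $p_n=a^{-\zeta(n)}(ab)^{-\lfloor n/2\rfloor}(\alpha^n+\beta^n)$ (negative exponent on $a$); with that correction the identity $(\alpha^n+\beta^n)/(ab)^{n/2}=(a/b)^{\zeta(n)/2}p_n$ that your argument relies on does check out.
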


\begin{proof}
Since $\det \left( W_{r}\right) =\dprod\limits_{j=0}^{n-1}\lambda _{j}\left(
W_{r}\right) ,$ we get the desired result.
\end{proof}

\section{Acknowledgement}

This research is supported by Amasya University Scientific Research Project
Coordinatorship (BAP). Project No: FMB-BAP 20-0474.


\begin{thebibliography}{99}
\bibitem{alptekin} E. G. Alptekin, T. Mansour and N.Tuglu, Norms of
Circulant and Semicirculant Matrices with Horadam's Numbers, ARS
Combinatoria, 85, 353-359, 2007.

\bibitem{bahsi} M. Bahsi, \textit{On the norms of circulant matrices with
the generalized Fibonacci and Lucas numbers,} TWMS J. Pure Appl. Math., 
\textbf{6} 1 (2015), 84-92.

\bibitem{bilgici} G. Bilgici, \textit{Two generalizations of Lucas sequence,}
Appl. Math. Comput., \textbf{245}, (2014), 526--538.

\bibitem{cline} R.E. Cline, R.J. Plemmons and G. Worm, \textit{Generalized
Inverses of Certain Toeplitz Matrices}, Linear Algebra and its Applications, 
\textbf{8}, (1974), 25-33.

\bibitem{amara} A. Chandoul, \textit{On the norms of $r-$circulant matrices
with generalized Fibonacci numbers,} J. Algebra Comb. Discrete Struct. Appl. 
\textbf{4} (2017), 13-21.

\bibitem{edson} M. Edson and O. Yayenie, \textit{A new generalizations of
Fibonacci sequences and extended Binet's Formula,} Integers, \textbf{9} A48
(2009), 639-654.

\bibitem{horn} R. A. Horn and C. R. Johnson, \textit{Topics in matrix
analysis,} Cambridge University Press, Cambridge (1991).

\bibitem{kome} C. Kome and Y. Yazlik, \textit{On the spectral norms of $r-$%
circulant matrices with the biperiodic Fibonacci and Lucas numbers,} J.
Inequal. Appl., \textbf{2017} 192 (2017).

\bibitem{lind} D. A. Lind, \textit{A Fibonacci circulant}, The Fibonacci
Quarterly \textbf{8}(5) (1970), 449-455.

\bibitem{mathias} R. Mathias, \textit{The spectral norm of a nonnegative
matrix,} Linear Algebra Appl. \textbf{139} (1990), 269-284.

\bibitem{pentti} J. K. Merikoski, P. Haukkanen, M. Mattila, and T.
Tossavainen, \textit{On the spectral and Frobenius norm of a generalized
Fibonacci r-circulant matrix, }Spec. Matrices \textbf{6} (2018), 23-36.

\bibitem{nalli} A. Nalli, M. Sen, \textit{On The Norms of Circulant Matrices
with Generalized Fibonacci Numbers}, Sel\c{c}uk J. Appl. Math. \textbf{11}%
(1) (2010),107-116.

\bibitem{panario} D. Panario, M. Sahin and Q. Wang, \textit{A family of
Fibonacci-like conditional sequences,} Integers, \textbf{13} A78 (2013).

\bibitem{shen} S. Shen and J Cen, \textit{On the bounds for the norms of $r-$%
circulant matrices with the Fibonacci and Lucas numbers,} Appl. Math.
Comput. \textbf{216} (2010), 2891--2897.

\bibitem{solak} S. Solak, \textit{On the norms of circulant matrices with
the Fibonacci and Lucas numbers,} Appl. Math. Comput., \textbf{160} 1
(2005), 125-132.

\bibitem{solak1} S. Solak, \textit{Erratum to \textquotedblleft On the norms
of circulant matrices with the Fibonacci and Lucas
numbers\textquotedblright\ [Appl. Math. Comput. 160 (2005) 125--132]}, Appl.
Math. Comput., \textbf{190} 2 (2007), 1855--1856.

\bibitem{tan} E. Tan, \textit{Some properties of bi-periodic Horadam
sequences,} Notes on Number Theory and Discrete Mathematics, \textbf{23} 4
(2017), 56--65.

\bibitem{tan1} E. Tan, \textit{General sum formula for the bi-periodic
Fibonacci and Lucas numbers,} Integers, \textbf{17} A42 (2017).

\bibitem{tan-leung} E. Tan and H-H. Leung, \textit{Some basic properties of
the generalized bi-periodic Fibonacci and Lucas sequences,} Advances in
Difference Equations, \textbf{2020} 26 (2020).

\bibitem{yayenie} O. Yayenie, \textit{A note on generalized Fibonacci
sequence,} Appl. Math. Comput. \textbf{217} (2011), 5603--5611.

\bibitem{yazlik} Y. Yazlik and N. Taskara, \textit{On the norms of an $r-$%
circulant matrix with the generalized $k-$Horadam numbers,} J. Inequal.
Appl., \textbf{2013} 394, (2013).

\bibitem{zielke} G. Zielke, \textit{Some remarks on matrix norms, condition
numbers, and error estimates for linear equations,} Linear Algebra Appl., 
\textbf{110} (1988), 29--41.
\end{thebibliography}
\end{document}